\definecolor{shadecolor}{gray}{.9}
 \newcommand{\arxiv}[1]{\begin{leftbar}\begin{shaded}
  #1\end{shaded}\end{leftbar}}
\newtheorem{theorem}{Theorem}[section]
\newtheorem{corollary}[theorem]{Corollary}
\newtheorem{lemma}[theorem]{Lemma}
\newtheorem{proposition}[theorem]{Proposition}
\theoremstyle{definition}
\newtheorem{definition}[theorem]{Definition}
\theoremstyle{remark}
\newtheorem{remark}[theorem]{Remark}
\theoremstyle{remark}
\newtheorem{example}{Example}[section]
\numberwithin{equation}{section}
\newcommand{\RR}{\mathbb{R}}
\newcommand{\Wq}{B^{(q)}}
\newcommand{\dd}{d\!\!l }
\newcommand{\calD}{\mathcal{D}}
\newcommand{\toD}{\xrightarrow{{\mathcal{D}}}}
\title{On integration with respect to the $q$-Brownian motion}
\author{
W{\l}odek  Bryc
}
\address{
Department of Mathematical Sciences,
University of Cincinnati,
PO Box 210025,
Cincinnati, OH 45221--0025, USA}
\email{Wlodzimierz.Bryc@UC.edu}
\keywords{$q$-Brownian motion; stochastic quantum calculus}
\date{Created: Dec. 2013. Revised: May 2014 and July 2014. Printed \today. (File: \jobname.tex)}
\begin{document}

\maketitle

\begin{abstract}
  For a parameter  $0<q<1$, we use the Jackson $q$-integral to define integration with respect to the so called $q$-Brownian motion.
 Our main results are the $q$-analogs of the $L_2$-isometry and of the  Ito formula for polynomial integrands.
We also indicate how  the $L_2$-isometry  extends the integral to more general functions.


\end{abstract}

\arxiv{{\bf Note}  This is the expanded version of the manuscript with additional material which  is marked by the gray background like in  this note.

Please note that the numbering of formulas in this version does not match the shorter version!!!  }

\section{Introduction}

In this paper we use the Jackson $q$-integral   to define an operation of integration with respect to a  Markov process which  first arose in non-commutative probability and which we shall call the $q$-Brownian motion.

The   non-commutative $q$-Brownian motion was  introduced in \cite{BS91} who put the formal approach of   \cite{frisch2003parastochastics} on firm mathematical footing.  Stochastic integration with respect to the non-commutative $q$-Brownian was developed in Ref.  \cite{donati2003stochastic}.

According to  \cite[Corollary 4.5]{BKS-97},  there exists a unique classical  Markov process   $(\Wq)_{t\in[0,\infty)}$ with the same univariate distributions and the same transition operators as the non-commutative $q$-Brownian motion.     Markov process $(\Wq)_{t\geq 0}$  is a martingale and which   converges in distribution to the Brownian motion as $q\to 1$.
With some abuse of terminology we call  $(\Wq_t)$  the $q$-Brownian motion and we review its basic properties in Section \ref{Sec-qB}.

Our definition of integration with respect to  $(\Wq_t)$ uses the orthogonal martingale polynomials   and mimics the well-known properties of the Ito integral.
The integral of an instantaneous function $f(\Wq_s,s)$ of the process is denoted by $$\int_0^t f(\Wq_s,s)\dd \Wq,$$
where  to avoid confusion with the Ito integral with respect to the martingale $(\Wq_t)$ we use $\dd$ instead of $d$.

We define this integral for  $0<q<1$ and for polynomials $f(x,t)$ in variable $x$ with coefficients that are bounded for small enough $t$.
Our definition uses the Jackson integral  \cite{Jackson-1910} which is reviewed in Section  \ref{Sec-Jackson}.  This approach naturally  leads to the $q$-analog of the $L_2$-isometry
 \begin{equation}\label{EQ-iso}
   E\left(\left|\int_0^t f(\Wq_s,s)\dd \Wq\right|^2\right)=\int_0^tE\left(\left|f(\Wq_s,s)\right|^2\right) d_qs,
 \end{equation}
 with the Jackson $q$-integral appearing on the right hand side.  In Theorem \ref{Thm1} we establish this formula for  polynomial $f(x,t)$ and   in  Corollary \ref{Cor} we use it to extend the integral to more general functions.   Proposition \ref{P4.5} shows that the integral is well defined for analytic functions $f(x)$  that do not depend on $t$. In Section \ref{Sec-SDE} we use Corollary \ref{Cor}   to exhibit a solution of the "linear $q$-equation" $\dd Z = a Z \dd\Wq$.

 Our second main result is a version of Ito's formula which takes the form
 \begin{multline}\label{EQ-ito}
   f(\Wq_t,t)-f(0,0)\\ =\int_0^t (\nabla_{x}^{(s)} f)(\Wq_s,s)\dd\Wq+\int_0^t (\calD_{q,s} f) (\Wq_{qs},s)d_qs +\int_0^t (\Delta_{x}^{(s)} f) (\Wq_{qs},s)d_qs.
 \end{multline}
 In Theorem \ref{Thm2}, this formula is
  established for polynomial $f(x,t)$ and $q\in(0,1)$.
  In Remark  \ref{Rem-Ito} we point out that we expect this formula to hold in more generality.

  As in  the Ito formula, the operators $\calD_{q,s}$, $\nabla_x^{(s)}$ and $\Delta_x^{(s)}$ should be interpreted as acting on the appropriate variable of the function $f(x,s)$, which is then evaluated at $x=\Wq_s$ or at $x=\Wq_{qs}$.
The  time-variable operator $\calD_{q,s} f$ is the  $q$-derivative with respect to variable $s$:
\begin{equation}\label{Def:Dq}
 (\calD_{q,s} f)(x,s)=\frac{f(x,s)-f(x,qs)}{(1-q)s}.
\end{equation}
This expression is well defined for  $s>0$ which is all that is needed in  \eqref{EQ-ito} when $q\in(0,1)$.

 The  operators $\nabla_x^{(s)}$ and $\Delta_x^{(s)}$   act only on the ``space variable" $x$ but they depend on the time variable $s$, and are defined as the ``singular integrals" with respect to two time-dependent probability kernels:
 \begin{eqnarray}
   \label{EQ:nabla} \\ \nonumber
   (\nabla_{x}^{(s)} f)(x,s)&=&\int_\RR \frac{f(y,s)-f(x,s)}{y-x}\nu_{x,s}(dy) \\
   \label{EQ:Delta} \\ \nonumber
   (\Delta_{x}^{(s)} f)(x,s)&=&
   \iint_{\RR^2} \frac{(y-x)f(z,s)+(x-z)f(y,s)+(z-y)f(x,s)}{(x-y)(y-z)(z-x)}\mu_{x,s}(dy,dz).
 \end{eqnarray}
Probability measures $\nu_{x,s}(dy)$  and $\mu_{x,s}(dy,dz)$  can be expressed in terms of the transition probabilities $P_{s,t}(x,dy)$  of the Markov process $(\Wq_t)_{t\in[0,\infty)}$ as follows:
    $$\nu_{x,s}(dy)= P_{sq^2,s}(qx,dy)$$ and  $$\mu_{x,s}(dy,dz)=P_{qs,s}(x,dy)\nu_{y,s}(dz)=P_{qs,s}(x,dy)P_{q^2s,s}(qy,dz).$$ Transition probabilities $P_{s,t}(x,dy)$ appear in \eqref{EQ:Pst} below. The same probability kernel $\nu_{x,s}(dy)$ appears also in \cite[Proposition 21]{anshelevich2013generators}.

As $q\to 1$ probability measures $ \nu_{x,s}(dy)$  and $\mu_{x,s}(dy,dz)$  converge to degenerated measures. Using Taylor expansion  one can check  that for smooth enough functions $ (\nabla_{x}^{(s)} f)(x,s)$ converges to $\partial f/\partial x$ and
 $ (\Delta_{x}^{(s)} f)(x,s)$  converges to  $\tfrac12\partial^2 f/\partial x^2$ as $q\to1$., so \eqref{EQ-ito} is a $q$-analog of the Ito formula.
We note that there seems to be some interest in $q$-analogs of the Ito formula.  In particular,  Ref. \cite{haven2009quantum,haven2011ito} discuss formal $q$-versions of the Ito formula and its
     applications to financial mathematics.


\arxiv{
\subsection{Limits as $q\to1$}
We conclude this introduction with the verification that
 as $q\to 1$,   formulas \eqref{EQ-iso} and \eqref{EQ-ito} coincide with the usual formulas of Ito calculus, at least on polynomials.

 Formula \eqref{EQ-iso} becomes the familiar $$E\left(\int_0^t f(W_s,s)dW\right)^2=\int_0^t E(f(W_s,s))^2ds,$$ as
 the Jackson  $q$-integral becomes the Riemann integral in the limit.  As $q\to 1$ formula \eqref{EQ-ito} becomes
  $$
  f(W_t,t)=\int_0^t \left(\frac{\partial}{\partial x}f\right)(W_s,s)dW+\int_0^t \left(\frac{\partial}{\partial s}f\right)(W_s,s)ds+\frac12 \int_0^t \left(\frac{\partial^2}{\partial x^2}f\right)(W_s,s)ds,
  $$
 but this is  less obvious. Clearly, $\calD_{q,s} f$ becomes $\frac{\partial}{\partial s}$ in the limit.
  Since $P_{q^2s,s}(qx,dy)\toD \delta_x(dy)$ in distribution as $q\to 1$ and $\frac{f(y)-f(x)}{y-x}=f'(x)+\frac12 f''(x)(y-x)+\dots$, we see that $\nabla_{x}^{(s)}$ becomes $\frac{\partial}{\partial x}$,   when acting on smooth enough functions of variable $x$. Next, we expand the numerator of the integrand  in \eqref{EQ:nabla} into the Taylor series at $x$ (here we take   $f$ to be a polynomial in variable $x$). We get
 \begin{multline}\label{1.6}
   (y-x)f(z)+ (x-z)f(y)+(z-y)f(x) \\=(y-x)\left[f(x)+f'(x)(z-x)+\frac12 f''(x)(z-x)^2\right]
   \\+(x-z)\left[f(x)+f'(x)(y-x)+\frac12 f''(x)(y-x)^2\right]+(z-y)f(x)\\ +R(x,y,x)
   = \frac12(x-y)(y-z)(z-x)f''(x)+R(x,y,z).
 \end{multline}
 }\arxiv{
After inserting  \eqref{1.6} into \eqref{EQ:Delta}, the first term gives $\tfrac12\tfrac{\partial ^2}{\partial x^2}$, since  $\mu_{x,s}(dy,dz)$ is a probability measure.  
The reminder term $R(x,y,z)$ consists of the higher order terms. The coefficient at  $f^{(k)}(x)/k!$ in $R$ is
 \begin{multline*}
   (y-x)(z-x)^k+(x-z)(y-x)^k=
 (y-x)(z-x)\left((z-x)^{k-1} - (y-x)^{k-1}\right)\\=
 (x-y)(y-z)(z-x) \sum_{j=0}^{k-2} (z-x)^j(y-x)^{k-2-j},
 \end{multline*} and it is clear that for $k\geq 3$ we have $\lim_{q\to 1^-}\int (z-x)^j(y-x)^{k-2-j} \mu_{x,s}(dy,dz)= 0$, since  $\mu_{x,s}(dy,dz)\toD \delta_x(dy)\delta_x(dz)$   in distribution as $q\to1$.
 }

\section{$q$-Brownian motion}\label{Sec-qB}

 With $\Wq_0=0$,   the univariate distribution of $\Wq_t$ is a $q$-Gaussian distribution
\begin{multline}
\label{EQ:q-gauss}   \gamma_{t;q}(dy) \\= \frac{\sqrt{1-q}}{2\pi \sqrt{4t-(1-q)y^{2}}}\prod_{k=0}^{\infty
}\left( (1+q^{k})^{2}-(1-q)\frac{y^{2}}{t}q^{k}\right) \prod_{k=0}^{\infty
}(1-q^{k+1}) dy,
\end{multline}
supported on the interval $|y|\leq 2\sqrt{t}/\sqrt{1-q}$.   Here, parameter $t$ is the variance.  (We note that there are   several other distributions that are also called  $q$-Gaussian, see the introduction to \cite{diaz2009gaussian}.)

The transition probabilities $P_{s,t}(x,dy)$ are non-homogeneous and, as noted in \cite{Bryc-Wesolowski-2013-gener}, are well defined for all $x\in\RR$.
The explicit form will not be needed in this paper, but for completeness we note that for  $|x|\leq 2\sqrt{s}/\sqrt{1-q}$ and $s<t$ transition probability  $P_{s,t}(x,dy)$  has  density supported on the interval $|y|\leq 2\sqrt{t}/\sqrt{1-q}$ which can be written as an infinite product
\begin{multline}
  \label{EQ:Pst}
  P_{s,t}(x,dy)\\ = \frac{\sqrt{1-q}}{2\pi \sqrt{4t-(1-q)y^{2}}} \prod_{k=0}^{\infty }\frac{
(t-sq^{k})\left( 1-q^{k+1}\right) \left(
t(1+q^{k})^{2}-(1-q)y^{2}q^{k}\right) }{(t-sq^{2k})^{2}-(1-q)
q^{k}(t+sq^{2k})xy +(1-q)(sy^{2}+tx^{2})q^{2k}}dy.
\end{multline}
With $\Wq_0=0$, we have $\gamma_{t;q}(dy)=P_{0,t}(0,dy)$ and the separable version of the  process  satisfies  $|\Wq_t|\leq 2\sqrt{t}/\sqrt{1-q}$ for all $t\geq 0$ almost surely.
Formulas \eqref{EQ:q-gauss} and \eqref{EQ:Pst} are taken from \cite[Section 4.1]{Bryc-Wesolowski-03}, but they are well known, see  \cite[Theorem 4.6]{BKS-97} and
\cite{anshelevich2013generators,Bryc-Matysiak-Szablowski,szabl2012q}. We note that for $|x|>2\sqrt{s}/\sqrt{1-q}$ transition probability  $P_{s,t}(x,dy)$ has
an additional discrete component; for $q=0$ this discrete component is explicitly written out in \cite[Section 5.3]{Biane98}.

As $q\to1$, the transition probabilities $P_{s,t}(x;dy)$ converge in variation norm to the (Gaussian)  transition probabilities of the Wiener process. This can be deduced from the convergence of densities established in the appendix of \cite{ismail1988askey}.

 Our definition of the integral relies on the so called continuous $q$-Hermite polynomials $\{h_n(x;t):n=0,1,\dots\}$. These are monic polynomials  in variable $x$ that are defined by the three step recurrence
 \begin{equation}   \label{EQ:rec-q-hermite}
 xh_n(x;t)=h_{n+1}(x;t)+t [n] h_{n-1}(x;t),
 \end{equation}
 where $[n]=[n]_q=1+q+\dots+q^{n-1}$, $h_0(x;t)=1$, $h_1(x;t)=x$. (These are renormalized monic versions of the ``standard" continuous $q$-Hermite polynomials as given in \cite[Section 13.1]{Ismail-05} or in \cite[Section 3.26]{Koekoek-Swarttouw}.)

 Polynomials $\{h_n(x;t)\}$ play a special role because they  are orthogonal martingale polynomials for $(\Wq)$. The orthogonality is
 \begin{equation}\label{EQ:ortho}
   \int h_n(x;t)h_m(x;t)\gamma_{t;q}(dx)=\delta_{m,n} [n]! t^n,
 \end{equation}
where $[n]!=[n]_q!=[1]_q[2]_q\dots[n]_q$. The martingale property
\begin{equation}\label{EQ:mart}
h_n(x;s)=\int h_n(y;t)P_{s,t}(x,dy)
\end{equation}
holds for all real $x$ and $s<t$.
 When $q=1$ recursion \eqref{EQ:rec-q-hermite} becomes the recursion for  the Hermite polynomials,  which are the martingale orthogonal polynomials for   the Brownian motion \cite{Schoutens00}.

It is easy to check from \eqref{EQ:rec-q-hermite} that
\begin{equation}
  \label{EQ:h(t)2h(1)}
  h_n(x;t)=t^{n/2} h_n(x/\sqrt{t};1).
\end{equation}
From \eqref{EQ:h(t)2h(1)} it is clear that if $|x|\leq 2\sqrt{t}/\sqrt{1-q}$ then
\begin{equation}\label{EQ:h-growth}
   |h_n(x;t)|\leq C_n t^{n/2}
\end{equation}
for some constant $C_n$.
Explicit sharp bound with  $C_n=(1-q)^{-n/2}\sum_{k=0}^n[^n_k]$ can be deduced from \cite[(13.1.10)]{Ismail-05}.

\section{The Jackson $q$-integral}\label{Sec-Jackson}
For reader's convenience we review basic facts about the Jackson integral \cite{Jackson-1910}. In addition to introducing the notation,   we explicitly spell out the ``technical" assumptions that suffice for our purposes. In particular, since we will integrate functions defined on  $[0,\infty)$ only, we  assume that  $q\in(0,1)$. We follow  \cite{kac2002quantum} quite closely, but essentially the same material can be found in numerous other sources such as  \cite[Section 11.4]{Ismail-05}.

Suppose  $a:[0,\infty)\to \RR$ is bounded in a neighborhood of $0$, and let $b:[0,\infty)\to\RR$ be such that
\begin{equation}
  \label{EQ:bound}
  |b(t)-b(0)|\leq C t^\delta
\end{equation} for some $C<\infty$ and $\delta>0$ in a neighborhood of $0$.

\begin{definition}  For $t>0$ and $q\in(0,1)$, the Jackson $q$-integral of $a(s)$ with respect to $b(s)$ is defined as
\begin{equation}
  \label{J-int}
  \int_0^t a(s)d_q b(s):=\sum_{k=0}^\infty a(q^k t)\left(b(q^k t)-b(q^{k+1}t)\right).
\end{equation}
\end{definition}
 When $b(t)=t$, formula \eqref{J-int} takes the following form that goes back to \cite{Jackson-1910}
 \begin{equation}
   \label{J-ds-int}
  \int_0^t a(s)d_q s=(1-q)t\sum_{k=0}^\infty q^k a(q^k t).
 \end{equation}
 Recalling \eqref{Def:Dq}, it is clear that \eqref{J-int} is $\int_0^t a(s) (\calD_{q,s} b)(s) d_q s$.

We will also need the $q$-integration by parts formula \cite[\S5]{Jackson-1910}. This formula is derived in \cite[page 83]{kac2002quantum} under the assumption of differentiability of functions $a(t), b(t)$. Ref. \cite[Theorem 11.4.1]{Ismail-05} gives a $q$-integration by parts formula under more general assumptions, but since the conclusion is stated in a different form than what we need,  we give  a short proof.
\begin{proposition}
 For any pair of functions $a,b$ that satisfy bound \eqref{EQ:bound} near $t=0$ we have
\begin{equation}\label{EQ:by-parts}
 \int_0^t a(s)d_q b(s)= a(t)b(t)-a(0)b(0)-\int_0^t b(q s) d_q a(s).
\end{equation}
 \end{proposition}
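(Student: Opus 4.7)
The plan is to prove the identity by substituting the series definition \eqref{J-int} into both sides and performing an Abel/summation-by-parts manipulation on the resulting telescoping sum. No hidden analytic machinery should be needed — the role of the hypothesis \eqref{EQ:bound} on $a$ and $b$ is purely to guarantee absolute convergence of the relevant series and to pin down the boundary term in the limit.

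Concretely, I would set $a_k := a(q^k t)$ and $b_k := b(q^k t)$, so that
\begin{equation*}
\int_0^t a(s)\,d_q b(s) \;=\; \sum_{k=0}^\infty a_k\bigl(b_k-b_{k+1}\bigr)
\qquad\text{and}\qquad
\int_0^t b(qs)\,d_q a(s) \;=\; \sum_{k=0}^\infty b_{k+1}\bigl(a_k-a_{k+1}\bigr).
\end{equation*}
For any finite $N$ the partial sum on the left splits as
$\sum_{k=0}^N a_k b_k - \sum_{k=0}^N a_k b_{k+1}$, and shifting the index in the second sum gives
\begin{equation*}
\sum_{k=0}^N a_k(b_k-b_{k+1}) \;=\; a_0 b_0 - a_N b_{N+1} - \sum_{k=0}^{N-1}\bigl(a_k-a_{k+1}\bigr) b_{k+1}.
\end{equation*}
This is precisely the discrete analog of integration by parts and is the main algebraic step.

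It then remains to let $N\to\infty$. Under assumption \eqref{EQ:bound} applied to both $a$ and $b$, each function is continuous at $0$ with value $a(0)$ and $b(0)$ respectively, so $a_N b_{N+1}\to a(0)b(0)$. Moreover $|b_k-b_{k+1}|\leq 2C(q^k t)^\delta$ and $|a_k-a_{k+1}|\leq 2C(q^k t)^\delta$, while $a_k,b_{k+1}$ stay bounded near $0$, so both series converge absolutely and the tail manipulations are legal. Noting $a_0 b_0 = a(t)b(t)$ yields
\begin{equation*}
\int_0^t a(s)\,d_q b(s) \;=\; a(t)b(t)-a(0)b(0)-\sum_{k=0}^\infty\bigl(a_k-a_{k+1}\bigr)b_{k+1},
\end{equation*}
which is exactly \eqref{EQ:by-parts}.

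I do not anticipate a serious obstacle: the identity is essentially Abel summation. The only care-point is convergence of the boundary term $a_N b_{N+1}$ and of the two infinite series, and the Hölder-type bound \eqref{EQ:bound} handles both with room to spare. If one wanted to weaken the hypothesis (say, demand \eqref{EQ:bound} only for one of the two functions and mere boundedness of the other near $0$), one would have to revisit the convergence of $a_N b_{N+1}$ — but that is beyond what \eqref{EQ:by-parts} as stated requires.
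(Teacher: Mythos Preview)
Your proof is correct and is essentially the same as the paper's: both establish the finite-$N$ Abel summation identity (the paper writes it as the telescoping identity $\sum_{n=0}^N a_n(b_n-b_{n+1})+\sum_{n=0}^N b_{n+1}(a_n-a_{n+1})=a_0b_0-a_{N+1}b_{N+1}$) and then pass to the limit using \eqref{EQ:bound} to control convergence and the boundary term. The only cosmetic difference is your boundary term $a_N b_{N+1}$ versus the paper's $a_{N+1}b_{N+1}$, which of course have the same limit.
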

 \begin{proof}
Under our assumptions on $a,b$, the series  defining $\int_0^t a(s)d_q b(s)$ and $\int_0^t b(q s) d_q a(s)$ converge, so  we can pass to the limit  as $N\to\infty$  in the telescoping sum identity
\begin{multline*}
\sum_{n=0}^N a(q^n t)\left(b(q^n t)-b(q^{n+1}t)\right)+\sum_{n=0}^N b(q^{n+1}t)\left(a(q^n t)-a(q^{n+1}t)\right) \\ =a(t)b(t)-a(q^{N+1}t)b(q^{N+1}t).
\end{multline*}

 \end{proof}
We also note the $q$-anti-differentiation formula, which is a special case $a(s)=1$ of formula \eqref{EQ:by-parts}
\begin{equation}
  \label{EQ:antideriv}
  b(t)-b(0)=\int_0^t (\calD_{q,s} b)(s)d_qs.
\end{equation}

\section{Integration of polynomials with respect to $\Wq_t$}\label{Sec-SI}
Our definition is designed to imply the relation
\begin{equation}\label{WdW}
 \int_0^t h_n(\Wq_s,s)\dd\Wq=    \frac{1}{[n+1]_q}h_{n+1}(\Wq_t;t)
\end{equation}
and relies on expansion into polynomials $\{h_n(x;t)\}$.

Let $q\in(0,1)$ and suppose that $f(x,t)=\sum_{m=0}^d a_m(t)x^m$ is a polynomial of degree $d$ in variable $x$ with coefficients $a_m(t)$ that depend on $t\geq 0$.
Then, for $t>0$, we can expand $f$ into the $q$-Hermite polynomials \eqref{EQ:rec-q-hermite},
\begin{equation}\label{EQ:f2h}
f(x,t)=\sum_{m=0}^d \frac{b_m(t)}{[m]!}h_m(x;t).
\end{equation}
\begin{lemma}\label{Lem:1}  If  all the coefficients $a_m(t)$ of the polynomial $f(x,t)$ are bounded in a neighborhood of $0$ then the coefficients $b_m(t)$ in expansion \eqref{EQ:f2h} are bounded in the same neighborhood of $0$. If all the coefficients $a_m(t)$  satisfy estimate \eqref{EQ:bound} then
  coefficients $b_m(t)$ satisfy estimate \eqref{EQ:bound}.
\end{lemma}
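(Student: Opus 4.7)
The plan is to set up a triangular linear relation between the coefficient sequences $(a_k(t))$ and $(b_m(t))$ by using the explicit polynomial structure of $h_m(x;t)$, then invert it by reverse induction and check that both regularity properties (boundedness and the $t^\delta$-estimate) are preserved at each step.

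First I would unpack the three-term recursion \eqref{EQ:rec-q-hermite} to note that $h_m(x;t)$ is a monic polynomial in $x$ of degree $m$ whose non-leading coefficients are polynomials in $t$ vanishing at $t=0$. More precisely, by induction on $m$,
\begin{equation*}
h_m(x;t) = x^m + \sum_{j=1}^{\lfloor m/2\rfloor} \alpha_{m,j}\, t^j\, x^{m-2j}
\end{equation*}
for some real constants $\alpha_{m,j}$ (only even shifts in the $x$-degree appear because the recursion changes the degree by $\pm 1$ while multiplying by $t$). Plugging this into \eqref{EQ:f2h} and collecting powers of $x$ gives
\begin{equation*}
a_k(t) = \frac{b_k(t)}{[k]!} + \sum_{j\ge 1,\ k+2j\le d} \frac{\alpha_{k+2j,j}}{[k+2j]!}\, b_{k+2j}(t)\, t^j.
\end{equation*}
This is an upper-triangular system (indexed by descending $k$) with $1/[k]!$ on the diagonal, so it can be solved explicitly for $b_k(t)$.

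Next I would solve it by reverse induction on $k$, starting from $b_d(t) = [d]!\, a_d(t)$. The inductive identity takes the form
\begin{equation*}
b_k(t) = [k]!\, a_k(t) - [k]! \sum_{j\ge 1,\ k+2j\le d} \frac{\alpha_{k+2j,j}}{[k+2j]!}\, b_{k+2j}(t)\, t^j,
\end{equation*}
which expresses $b_k(t)$ as a finite linear combination of $a_k(t)$ together with products $b_{k+2j}(t)\, t^j$ with $j\ge 1$. Boundedness of $b_k$ near $0$ then follows immediately from the inductive hypothesis (each $b_{k+2j}$ is bounded) and the boundedness of $a_k$ and of $t^j$.

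For the $t^\delta$-estimate I would simply subtract the value at $t=0$: since the sum vanishes at $t=0$, one has $b_k(0) = [k]!\, a_k(0)$, and hence
\begin{equation*}
b_k(t) - b_k(0) = [k]!\,(a_k(t)-a_k(0)) - [k]! \sum_{j\ge 1} \frac{\alpha_{k+2j,j}}{[k+2j]!}\, b_{k+2j}(t)\, t^j.
\end{equation*}
The first term is $O(t^\delta)$ by assumption, and each term in the sum is bounded in absolute value by a constant times $t^j \le t$, because $b_{k+2j}$ is already bounded near $0$ by the first part of the lemma. Taking $\delta' := \min(\delta,1)>0$, this gives $|b_k(t)-b_k(0)|\le C' t^{\delta'}$, i.e. estimate \eqref{EQ:bound} for $b_k$ with possibly a worsened exponent and constant. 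There is no real obstacle here; the only bookkeeping point to be careful about is observing that the extra $t^j$ factors with $j\ge 1$ only help and cannot destroy the estimate, so the induction closes cleanly.
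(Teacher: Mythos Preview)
Your argument is correct, and it takes a genuinely different route from the paper. The paper computes $b_m(t)$ directly from the orthogonality relation \eqref{EQ:ortho} and the scaling identity \eqref{EQ:h(t)2h(1)}: writing $b_m(t)=t^{-m}\int f(x,t)h_m(x;t)\gamma_{t;q}(dx)$ and substituting $y=x/\sqrt{t}$ yields the closed expression
\[
b_m(t)=[m]!\,a_m(t)+\sum_{j=m+1}^d a_j(t)\,t^{(j-m)/2}\int y^j h_m(y;1)\,\gamma_{1;q}(dy),
\]
from which boundedness and the estimate $|b_m(t)-b_m(0)|\leq [m]!\,|a_m(t)-a_m(0)|+C\sqrt{t}$ are immediate, with no induction needed. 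Your approach is instead purely algebraic: you read off from the recursion that the sub-leading coefficients of $h_m(x;t)$ carry explicit positive powers of $t$, set up the triangular system, and invert by reverse induction. Both arguments ultimately exploit the same phenomenon---the off-diagonal part of the change of basis from $\{x^k\}$ to $\{h_k(x;t)\}$ vanishes at $t=0$---but yours avoids any appeal to integration or to the scaling relation, at the cost of a short induction. Your bound on the remainder is in fact slightly sharper ($Ct$ versus the paper's $C\sqrt{t}$), though this makes no difference for the stated conclusion.
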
\begin{proof}
 Since $h_0,\dots,h_{m-1}$ are monic, they span the polynomials of degree ${m-1}$ so that by orthogonality \eqref{EQ:ortho} we have $\int x^j h_m(x,t)\gamma_{t;q}(dx)=0$ for $j<m$. For the same reason, $\int x^m h_m(x,t)\gamma_{t;q}(dx) =\int h_m^2(x,t)\gamma_{t;q}(dx)=t^m[m]!$. Thus   the coefficients are
\begin{multline*}
  b_m(t)=\frac{1}{t^{m}} \int f(x,t) h_m(x;t)\gamma_{t;q}(dx) =\frac{1}{t^{m}}\sum_{j=m}^d a_j(t) \int x^j h_m(x;t)\gamma_{t;q}(dx)
  \\= \frac{1}{t^{m/2}}\sum_{j=m}^d a_j(t) \int x^j h_m(x/\sqrt{t};1)\gamma_{t;q}(dx),
  \end{multline*}
 where in the last line we used \eqref{EQ:h(t)2h(1)}.  Since random variable $\Wq_t/\sqrt{t}$ has the same distribution as $ \Wq_1$, changing the variable of integration to $y=x/\sqrt{t}$ we get
  \begin{multline*}
  b_m(t)  =\sum_{j=m}^d a_j(t) t^{(j-m)/2}\int y^j h_m(y;1)\gamma_{1;q}(dy) \\=  [m]! a_m(t)+\sum_{j=m+1}^d a_j(t) t^{(j-m)/2}\int y^j h_m(y;1)\gamma_{1;q}(dy).
\end{multline*}
Thus $b_m(0)=[m!]a_m(0)$   and   for $0<t<1$ we get $|b_m(t)-b_m(0)|\leq [m]! |a_m(t)-a_m(0)|+C \sqrt{t}$.
\end{proof}

\begin{definition}
 Let $f(x,t)$ be a polynomial in variable $x$ with coefficients bounded in a neighborhood of $t=0$.
  We define the $\dd \Wq$ integral as the sum of the Jackson $q$-integrals with respect to random functions $h_{m+1}(\Wq_{s};s)$ as follows:
  \begin{equation}
  \label{EQ:q-Int-1}
  \int_0^t f(\Wq_s,s)\dd\Wq= \sum_{m=0}^d  \frac{1}{[m+1]!}\int_0^t b_m(s) d_q h_{m+1}(\Wq_{s};s).
\end{equation}
\end{definition}
 Since $|\Wq_t|\leq 2\sqrt{t}/\sqrt{1-q}$ for all $t\geq 0$ almost surely, from Lemma \ref{Lem:1} and inequality \eqref{EQ:h-growth}
 we see that the Jackson $q$-integrals on the right hand side of \eqref{EQ:q-Int-1} are well defined.

As a special case we get the following formula for the integrals of the deterministic functions.
\begin{example}\label{Ex:4.1} For non-random integrands, we have
  $$\int_0^t f(s)\dd\Wq=\int_0^t f(s)d_q\Wq_s.$$
 Indeed, since $h_0=1$  the expansion \eqref{EQ:f2h} is $f(s)h_0(x;s)$ and since $h_1(x;s)=x$, we get $ d_q h_{1}(\Wq_{s};s) = d_q\Wq_s$.

 By computing the fourth moments one can check that the distribution of random variable $\int_0^t f(s)d_q\Wq_s$  in general is not $\gamma_{\sigma^2,q}$.
 \end{example}

 When the coefficients of $f(x,t)$  satisfy estimate \eqref{EQ:bound},
we can use \eqref{EQ:by-parts} to write the $\dd \Wq$ integral  as
  \begin{multline}
  \label{EQ:q-Int-2}
  \int_0^t f(\Wq_s,s)\dd\Wq\\=\sum_{m=0}^d   \frac{b_m(t)}{[m+1]!}h_{m+1}(\Wq_t;t)-\sum_{m=0}^d  \frac{1}{[m+1]!}\int_0^t h_{m+1}(\Wq_{qs};qs)d_q b_m(s).
\end{multline}
Taking $b_m(t)=0$ or $1$ we get   formula \eqref{WdW}.

 Formula \eqref{EQ:q-Int-2} leads to a couple of explicit examples.
 \begin{example} \label{Ex:4.2}
 Noting that $x=h_1$,  $x^2=h_2(x;t)+t$, and $h_3(x;t)=x^3-(2+q)xt$, we get the following $q$-analogs of the formulas that are well known for the Ito integrals.
   $$
   \int_0^t \Wq_s\dd\Wq =\frac{1}{1+q} \left((\Wq_t)^2-t\right),
   $$
    $$
   \int_0^t (\Wq_s)^2\dd\Wq =\frac{1}{(1+q)(1+q+q^2)}(\Wq_t)^3-\frac{2+q}{(1+q)(1+q+q^2)}t \Wq_t + \int_0^t s d_q\Wq_s.
   $$
 \end{example}

 We remark that a similar definition could be introduced for a wider class of the $q$-Meixner processes from \cite{Bryc-Wesolowski-2013-gener,Bryc-Wesolowski-03}, compare  \cite[Theorem 7]{Schoutens:998lr}.
\subsection{The $L_2$-isometry}
Our first main result is the $q$-analog of the Ito isometry.
\begin{theorem}\label{Thm1}
  If $0<q<1$ and $f(x,t)$ is a polynomial in $x$ with coefficients bounded in a neighborhood of $t=0$ then \eqref{EQ-iso} holds.
\end{theorem}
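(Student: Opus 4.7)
The plan is to expand both sides of \eqref{EQ-iso} in the orthogonal basis \eqref{EQ:f2h} and reduce them to the same double sum via the identity $1-q^{m+1}=(1-q)[m+1]_q$.

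First, substitute \eqref{EQ:f2h} into the definition \eqref{EQ:q-Int-1} and unfold each Jackson integral via \eqref{J-int}, obtaining
$$\int_0^t f(\Wq_s,s)\dd\Wq=\sum_{m=0}^d\frac{1}{[m+1]!}\sum_{k=0}^\infty b_m(q^k t)\,\Delta_k^{(m)},$$
where $\Delta_k^{(m)}:=h_{m+1}(\Wq_{q^k t};q^k t)-h_{m+1}(\Wq_{q^{k+1}t};q^{k+1}t)$. Lemma \ref{Lem:1} controls $b_m$ near $0$, and \eqref{EQ:h-growth} gives $|h_{m+1}(\Wq_{q^k t};q^k t)|\leq C_{m+1}(q^k t)^{(m+1)/2}$ almost surely, which supplies the geometric bound needed to take $L^2$-expectations term by term.

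Second, square and take expectation. The key point is that $\Delta_k^{(m)}$ is a martingale difference: for $k<l$ the increment $\Delta_l^{(n)}$ is $\mathcal{F}_{q^{k+1}t}$-measurable, while by the martingale property \eqref{EQ:mart} one has $E[\Delta_k^{(m)}\mid \mathcal{F}_{q^{k+1}t}]=0$, so all $k\ne l$ cross-products vanish. For $k=l$, abbreviate $u=q^k t$ and $s=q^{k+1}t$ and expand the product into four terms; conditioning the two mixed-time terms on $\mathcal{F}_s$ and applying \eqref{EQ:mart} reduces them to expectations at the single time $s$. Orthogonality \eqref{EQ:ortho} (against $\gamma_{u;q}$ and $\gamma_{s;q}$) then forces $m=n$ and yields
$$E\bigl[\Delta_k^{(m)}\Delta_k^{(n)}\bigr]=\delta_{m,n}\,[m+1]!\,(q^k t)^{m+1}\bigl(1-q^{m+1}\bigr).$$

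Third, combine the pieces. Using $[m+1]!=[m+1]_q[m]!$ together with $1-q^{m+1}=(1-q)[m+1]_q$ collapses the constants to
$$E\left|\int_0^t f(\Wq_s,s)\dd\Wq\right|^2=\sum_{m=0}^d\frac{1-q}{[m]!}\sum_{k=0}^\infty b_m(q^k t)^2 (q^k t)^{m+1}.$$
On the right-hand side of \eqref{EQ-iso}, orthogonality \eqref{EQ:ortho} applied to \eqref{EQ:f2h} gives $E|f(\Wq_s,s)|^2=\sum_{m=0}^d b_m(s)^2 s^m/[m]!$, and the Jackson formula \eqref{J-ds-int} reproduces exactly the same double sum.

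The main obstacle, I expect, is the computation in step two: bundling the cross-time martingale cancellation with the cross-index orthogonality at a single time, while rigorously justifying the interchange of infinite summation and expectation from the bounds of Lemma \ref{Lem:1} and \eqref{EQ:h-growth}. Once this is in hand, everything else is an algebraic matching driven by $1-q^{m+1}=(1-q)[m+1]_q$.
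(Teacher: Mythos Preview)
Your proposal is correct and follows essentially the same approach as the paper: expand $f$ in the $q$-Hermite basis, use the martingale property \eqref{EQ:mart} to kill the cross-time ($k\ne l$) terms, use orthogonality \eqref{EQ:ortho} to kill the cross-index ($m\ne n$) terms, and match the surviving diagonal sum with the right-hand side via the Jackson formula \eqref{J-ds-int}. The only cosmetic difference is that the paper packages the last step as $\int_0^t b_m^2(s)s^m\,d_qs=\tfrac{1}{[m+1]}\int_0^t b_m^2(s)\,d_q(s^{m+1})$ rather than invoking $1-q^{m+1}=(1-q)[m+1]_q$ explicitly, and it does not spell out the $L^2$-interchange justification that you flag.
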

\begin{proof}
With \eqref{EQ:f2h}, by orthogonality \eqref{EQ:ortho} for the $q$-Hermite polynomials, we have
$$
  E \left(\left(f(\Wq_s,s)\right)^2\right)=\sum_{m=0}^d \frac{b_m^2(s) s^m}{[m]!}.
$$
So the right hand side of \eqref{EQ-iso} is
\begin{equation}
  \label{EQ:RHS}
  \sum_{m=0}^d \frac{1}{[m]!} \int_0^tb_m^2(s) s^m d_q(s)=\sum_{m=0}^d \frac{1}{[m+1]!} \int_0^tb_m^2(s)  d_q(s^{m+1}).
\end{equation}
On the other hand, expanding the right hand side of \eqref{EQ:q-Int-1} we get
   \begin{multline*}
   E\left(\left(\int_0^t f(\Wq_s,s)\dd \Wq\right)^2\right)
   =\sum_{m,n=0}^d
   \frac{1}{[m+1]![n+1]!}\sum_{k,j=0}^\infty b_m(q^k t)b_n(q^jt) \\
  \times E\left(\left(h_{n+1}(\Wq_{tq^k};tq^k)-h_{n+1}(\Wq_{tq^{k+1}};tq^{k+1}) \right) \left(h_{m+1}(\Wq_{tq^j};tq^j)-h_{m+1}(\Wq_{tq^{j+1}};tq^{j+1})\right)\right).
   \end{multline*}
  Since $h_n(\Wq_t;t)$ is a martingale in the natural filtration, 
  for $j<k$ we have
  $$
  E\left(h_{m+1}(\Wq_{tq^j};tq^j)-h_{m+1}(\Wq_{tq^{j+1}};tq^{j+1})|\Wq_{tq^k}, \Wq_{tq^{k+1}} \right)=0.
  $$
  Similarly,
   $$E\left(h_{n+1}(\Wq_{tq^k};tq^k)-h_{n+1}(\Wq_{tq^{k+1}};tq^{k+1})|\Wq_{tq^j}, \Wq_{tq^{j+1}} \right)=0$$
   for $j>k$. So the only contributing terms are $j=k$ and
    \begin{multline*}
   E\left(\left(\int_0^t f(\Wq_s,s)\dd \Wq\right)^2\right)
   =\sum_{m,n=0}^d
   \frac{1}{[m+1]![n+1]!}\sum_{k=0}^\infty b_m(q^k t)b_n(q^kt) \\ \times
   E\left(\left(h_{n+1}(\Wq_{tq^k};tq^k)-h_{n+1}(\Wq_{tq^{k+1}};tq^{k+1}) \right) \left(h_{m+1}(\Wq_{tq^k};tq^k)-h_{m+1}(\Wq_{tq^{k+1}};tq^{k+1})\right)\right).
   \end{multline*}
   Noting that
   $$ E\left(\left(h_{n+1}(\Wq_{tq^k};tq^k)-h_{n+1}(\Wq_{tq^{k+1}};tq^{k+1}) \right) \left(h_{m+1}(\Wq_{tq^k};tq^k)-h_{m+1}(\Wq_{tq^{k+1}};tq^{k+1})\right)\right)=0$$ for $m\ne n$, we see that
   \begin{multline*}
   E\left(\left(\int_0^t f(\Wq_s,s)\dd \Wq\right)^2\right)\\
   =\sum_{m=0}^d
   \frac{1}{[m+1]!^2}\sum_{k=0}^\infty b_n^2(q^k t)
   E\left(\left(h_{m+1}(\Wq_{tq^k};tq^k)-h_{m+1}(\Wq_{tq^{k+1}};tq^{k+1})\right)^2 \right)
   \\=
  \sum_{m=0}^d
   \frac{1}{[m+1]!}\sum_{k=0}^\infty b_n^2(q^k t)
    t^{m+1}\left(q^{k(m+1)}-  q^{(k+1)(m+1)}\right).
   \end{multline*}
   Here we used the fact that for $s<t$,   by another application of the martingale property and \eqref{EQ:ortho}, we have
   $$E\left(\left(h_k(\Wq_{t};t)-h_k(\Wq_{s};s)\right)^2\right)=E \left(h_k^2(\Wq_{t};t)\right)-E\left(h_k^2(\Wq_{s};s)\right)=[k]!(t^k-s^k).$$

   This shows that the right hand side of \eqref{EQ:q-Int-1} matches \eqref{EQ:RHS}.
\end{proof}
  Suppose that for every $t>0$ the function $f(x,t)$ as a function of $x$ is square integrable  with respect to $\gamma_{t;q}(dx)$ so that we have an $L_2(\gamma_{t;q}(dx))$-convergent expansion
  \begin{equation}\label{EQ-L2}
    f(x,t)=\sum_{n=0}^\infty \frac{b_n(t)}{[n]!}h_n(x;t).
  \end{equation}

\begin{corollary}\label{Cor} If each of the coefficients $b_n(t)$ in \eqref{EQ-L2} is bounded in a neighborhood of $0$ and the series
$$\sum_{n=0}^\infty \frac{1}{[n]!}\int_0^t b_n^2(s)s^n d_qs$$ converges,   then the sequence of random variables
\begin{equation}
  \label{EQ:L2-def}
  \int_0^t \sum_{k=0}^n \frac{b_k(s)}{[k]!}h_k(\Wq_s;s)\dd \Wq
\end{equation}
converges in  mean square as $n\to\infty$.
\end{corollary}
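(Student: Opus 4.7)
The plan is to establish a Cauchy sequence argument directly using the isometry from Theorem \ref{Thm1}.

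First, for each $n$ let
$$f_n(x,s) = \sum_{k=0}^n \frac{b_k(s)}{[k]!} h_k(x;s),$$
which is a polynomial in $x$ whose coefficients (as polynomials in the $b_k(s)$ with coefficients that are polynomials in $s$) are bounded in a neighborhood of $0$, since each $b_k$ is. Thus each $I_n := \int_0^t f_n(\Wq_s,s)\dd\Wq$ is well defined by \eqref{EQ:q-Int-1}, and Theorem \ref{Thm1} applies to $f_n$ and to each difference $f_n - f_m$.

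Next, for $n>m$, the $\dd\Wq$ integral is linear in the integrand (clear from \eqref{EQ:q-Int-1}), so $I_n - I_m = \int_0^t (f_n - f_m)(\Wq_s,s)\dd\Wq$. Applying Theorem \ref{Thm1} to the polynomial $f_n - f_m = \sum_{k=m+1}^n \frac{b_k(s)}{[k]!} h_k(x;s)$ gives
$$E\bigl(|I_n - I_m|^2\bigr) = \int_0^t E\bigl(|(f_n-f_m)(\Wq_s,s)|^2\bigr)\, d_q s.$$
Using the orthogonality relation \eqref{EQ:ortho} for the $q$-Hermite polynomials and the fact that $\Wq_s$ has distribution $\gamma_{s;q}$, the inner expectation evaluates to $\sum_{k=m+1}^n b_k^2(s) s^k/[k]!$. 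Interchanging the finite sum with the Jackson $q$-integral,
$$E\bigl(|I_n - I_m|^2\bigr) = \sum_{k=m+1}^n \frac{1}{[k]!}\int_0^t b_k^2(s) s^k\, d_q s.$$

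Finally, by the hypothesis that the series $\sum_{k\geq 0} \frac{1}{[k]!}\int_0^t b_k^2(s) s^k\, d_q s$ converges, its tails tend to zero, so $E(|I_n - I_m|^2) \to 0$ as $m, n \to \infty$. Hence $\{I_n\}$ is Cauchy in $L_2(P)$ and converges in mean square. There is no real obstacle in this argument: the only point requiring a moment of thought is that Theorem \ref{Thm1} is applicable to each $f_n - f_m$, which amounts to checking that $h_k(x;s)$ has coefficients polynomial in $s$ (immediate from \eqref{EQ:rec-q-hermite}) so that boundedness of the $b_k$ near zero passes to the corresponding coefficients $a_m(s)$ of $f_n - f_m$.
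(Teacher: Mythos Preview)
Your argument is correct and is exactly the standard Cauchy-sequence derivation the paper has in mind; the corollary is stated there without an explicit proof precisely because it follows from Theorem \ref{Thm1} in the way you describe. Your closing remark about checking that Theorem \ref{Thm1} applies to $f_n-f_m$ (via the fact that the coefficients of $h_k(x;s)$ are polynomials in $s$, so boundedness of the $b_k$ near $0$ transfers to the monomial coefficients $a_m$) is the only point that needed spelling out, and you handled it correctly.
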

When the mean-square limit \eqref{EQ:L2-def} exists, it is natural to denote it as
\begin{equation}\label{EQ:l.i.m}
  \int_0^t f(\Wq_s,s)\dd\Wq . 
\end{equation}

We remark that the verification of the assumptions of Corollary \ref{Cor}  may not be easy. In Section    \ref{Sec-SDE} we consider  function
$$f(x,t)=\prod_{k=0}^\infty \left(1-(1-q) q^k x +t q^{2k}\right)^{-1}$$
and we rely on explicit expansion  \eqref{EQ-L2}  to show that the integral  \eqref{EQ:l.i.m} is well defined for $t<1/(1-q)$. The following result deals with analytic functions that do not depend on variable $t$.
\begin{proposition}\label{P4.5}
Suppose $f(x)=\sum_{k=0}^\infty a_k x^k$ with infinite radius of convergence.   Then $ \int_0^t f(\Wq_s)\dd\Wq$ is well defined for all $t>0$.
\end{proposition}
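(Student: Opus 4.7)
The plan is to verify the two hypotheses of Corollary \ref{Cor} for $f(x,t)=f(x)$: (i) each coefficient $b_n(t)$ in the $q$-Hermite expansion is bounded in a neighborhood of $0$, and (ii) the series $\sum_n \frac{1}{[n]!}\int_0^t b_n^2(s)s^n\,d_qs$ converges. The key observation is that because the $q$-Gaussian $\gamma_{t;q}$ has compact support, everything about $f$ depends only on its values on a bounded interval, and Parseval plus Tonelli carry all the weight.

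First I would argue that for every $t>0$ the expansion \eqref{EQ-L2} is valid in $L_2(\gamma_{t;q})$. Since $\gamma_{t;q}$ is supported on $[-2\sqrt t/\sqrt{1-q},\,2\sqrt t/\sqrt{1-q}]$ and $f$ is entire, $f$ is continuous on this compact set, hence in $L_2(\gamma_{t;q})$; as $\{h_n(\cdot;t)\}$ is an orthogonal basis of $L_2(\gamma_{t;q})$ (moment determinacy is automatic due to the compact support), the $L_2$ expansion converges.

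Next I would repeat the calculation in the proof of Lemma \ref{Lem:1}, but for the infinite series $f(x)=\sum_k a_k x^k$. Using orthogonality and the scaling \eqref{EQ:h(t)2h(1)} together with the fact that $\Wq_t/\sqrt t$ has the same distribution as $\Wq_1$, one obtains
\begin{equation*}
 b_n(t)=\sum_{k=n}^\infty a_k\,t^{(k-n)/2}\,c_{n,k},\qquad c_{n,k}:=\int y^k h_n(y;1)\,\gamma_{1;q}(dy).
\end{equation*}
Since $|\Wq_1|\le 2/\sqrt{1-q}$ a.s. and $|h_n(y;1)|\le C_n$ on that interval by \eqref{EQ:h-growth}, we have $|c_{n,k}|\le C_n(2/\sqrt{1-q})^k$; the assumption that $f$ has infinite radius of convergence then makes the series for $b_n(t)$ converge for every $t\ge 0$ and define a continuous function of $\sqrt t$ on $[0,\infty)$. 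In particular $b_n$ is bounded on any bounded neighborhood of $0$, which gives hypothesis (i).

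For hypothesis (ii), I would apply Parseval in $L_2(\gamma_{s;q})$ using the orthogonality \eqref{EQ:ortho}: for each $s\in(0,t]$,
\begin{equation*}
 \sum_{n=0}^\infty \frac{b_n^2(s)\,s^n}{[n]!}=\int f^2(x)\,\gamma_{s;q}(dx)=E\bigl(f(\Wq_s)^2\bigr).
\end{equation*}
Since $|\Wq_s|\le 2\sqrt s/\sqrt{1-q}\le 2\sqrt t/\sqrt{1-q}$ almost surely and $f$ is continuous, $E(f(\Wq_s)^2)$ is bounded by $M_t:=\sup\{f(x)^2:|x|\le 2\sqrt t/\sqrt{1-q}\}<\infty$, uniformly in $s\in[0,t]$. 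Now the Jackson $q$-integral \eqref{J-ds-int} is a convergent positive sum, so Tonelli lets me interchange summation and $q$-integration:
\begin{equation*}
 \sum_{n=0}^\infty \frac{1}{[n]!}\int_0^t b_n^2(s)s^n\,d_qs=\int_0^t E\bigl(f(\Wq_s)^2\bigr)\,d_qs\le M_t\,t<\infty.
\end{equation*}
Both hypotheses of Corollary \ref{Cor} are verified, so the mean-square limit defining $\int_0^t f(\Wq_s)\dd\Wq$ exists. The only step requiring care is the justification of the $L_2(\gamma_{t;q})$-expansion and the Tonelli interchange, but both are transparent once one exploits the compact support of $\gamma_{t;q}$; there is no serious obstacle.
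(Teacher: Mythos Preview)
Your proof is correct and takes a genuinely different route from the paper's. The paper obtains an explicit bound on $|b_n(t)|$ via Cauchy's contour integral formula for the coefficients $a_k$: it writes $a_k=\frac{1}{2\pi i}\oint_{|z|=R}\frac{f(z)}{z^{k+1}}\,dz$, sums a geometric series under the contour integral, and arrives at an estimate of the shape
\[
|b_n(t)|\le \max_{|z|=R}|f(z)|\,\sqrt{[n]!}\left(\frac{2}{R\sqrt{1-q}}\right)^n\frac{R\sqrt{1-q}}{R\sqrt{1-q}-2\sqrt t}\,,
\]
with tractable $n$-dependence. It then inserts this bound directly into $\sum_n \frac{1}{[n]!}\int_0^t b_n^2(s)s^n\,d_qs$ and checks that the resulting series is dominated by a convergent geometric series.

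You sidestep all of this by invoking Parseval: the identity $\sum_n \frac{b_n^2(s)s^n}{[n]!}=E\bigl(f(\Wq_s)^2\bigr)$, together with the trivial bound $E\bigl(f(\Wq_s)^2\bigr)\le M_t$ coming from the compact support and continuity of $f$, finishes (ii) in one line after Tonelli. Your verification of (i) uses only the crude bound $|c_{n,k}|\le C_n(2/\sqrt{1-q})^k$ and absolute summability of $\sum_k|a_k|R^k$ for all $R$, which is enough since you do not need any uniformity in $n$. This is shorter and avoids complex analysis entirely. What the paper's approach buys that yours does not is the quantitative estimate on $b_n(t)$ itself; the paper remarks parenthetically that the same bound shows $b_n(t)$ satisfies the H\"older-type estimate \eqref{EQ:bound}, which is the hypothesis needed later in Theorem~\ref{Thm2}. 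Your argument establishes boundedness of $b_n$ near $0$, and with a little more care (separating the $k=n$ term) would also give \eqref{EQ:bound}, but you do not state that.
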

\begin{proof}
Since  the support of $\gamma_{t;q}(dx)$ is compact  and $f$ is bounded on compacts, it is clear that  we can expand $f(x)$ into  \eqref{EQ-L2}.
 We will show that the coefficients $b_n(t)$ of the expansion are bounded in a neighborhood of $0$. (The proof shows also that $b_n(t)$ satisfy estimate \eqref{EQ:bound};
 the later is not needed here, but this  property is assumed in Theorem \ref{Thm2} below.)

As in the proof of Lemma \ref{Lem:1}, we have
\begin{multline*}
b_n(t)=t^{-n}\int f(x) h_n(x;t)\gamma_{t;q}(dx)
= t^{-n/2}  \int   h_n(y;1) \sum_{k=0}^\infty a_k t^{k/2} y^k\gamma_{1;q}(dy).
\end{multline*}
Since on the support of $\gamma_{1;q}(dy)$ the series in the integrand converges absolutely and is dominated by a constant that does not depend on $y$,  we can switch the order of summation and integration so that
\begin{multline*}
b_n(t) = t^{-n/2}  \sum_{k=0}^\infty a_k t^{k/2}   \int   h_n(y;1)y^k\gamma_{1;q}(dy)\\
= t^{-n/2}  \sum_{k=n}^\infty a_k t^{k/2}   \int   h_n(y;1)y^k\gamma_{1;q}(dy),
\end{multline*}
where in the last line we used   the orthogonality of $h_n(y;1)$ and $y^k$ for $k<n$.

 Choose $R>0$   such that $R^2>4t/(1-q)$. Expressing the coefficients $a_k$ by the contour integrals we get
\begin{multline*}
b_n(t) = \frac{1}{2\pi i}  t^{-n/2}  \sum_{k=n}^\infty \oint _{|z|=R}\frac{f(z)}{z^{k+1}  }dz \, t^{k/2}   \int   h_n(y;1)y^k\gamma_{1;q}(dy)
\\ =\frac{1}{2\pi i} \oint _{|z|=R} f(z)\sum_{k=n}^\infty \frac{t^{k/2-n/2}}{z^{k+1} } t^{k/2}   \int   h_n(y;1)y^k\gamma_{1;q}(dy) dz.
\end{multline*}

Since $|y|\leq 2/\sqrt{1-q}$ on the support of   $\gamma_{1;q}(dy)$, from  \eqref{EQ:ortho} we get
$$
\left| \int   h_n(y;1)y^k\gamma_{1;q}(dy)\right|\leq \left(\frac2{\sqrt{1-q}}\right)^k  \int   |h_n(y;1)|\gamma_{1;q}(dy)\leq \left(\frac2{\sqrt{1-q}}\right)^k  \sqrt{[n]!}.
$$
Summing the resulting geometric series under the $dz$-integral, we get
\begin{equation}
  \label{b-bound}
|b_n(t)|\leq \max_{|z|=R}|f(z)| \sqrt{[n]!}\left(\frac{2}{R\sqrt{1-q}}\right)^n \frac{R\sqrt{1-q}}{R\sqrt{1-q}-2\sqrt{t}}.
\end{equation}
This shows that $b_n(t)$ is bounded in any neighborhood of $t=0$.

Next, we use \eqref{J-ds-int} and inequality \eqref{b-bound} to prove the convergence of the series.
\begin{multline*}
\sum_{n=0}^\infty \frac{1}{[n]!}\int_0^t b_n^2(s)s^n d_qs
=\sum_{n=0}^\infty \frac{(1-q)t^{n+1}}{[n]!}\sum_{k=0}^n b_n^2(q^{k} t) q^{(n+1)k}
\\
\leq C  \sum_{n=0}^\infty \left(\frac{4t}{R^2(1-q)}\right)^n (1-q)\sum_{k=0}^n  q^{k(n+1)}
= C  \sum_{n=0}^\infty \left(\frac{4t}{R^2(1-q)}\right)^n  \frac{1-q}{1-q^{n+1}}  \\ \leq  C  \sum_{n=0}^\infty \left(\frac{4t}{R^2(1-q)}\right)^n <\infty,
\end{multline*}
with constant $C=tR^2(1-q)\max_{|z|=R}|f(z)|^2/(R\sqrt{1-q}-2\sqrt{t})^2 $.

We  now apply Corollary \ref{Cor} to infer that $ \int_0^t f(\Wq_s)\dd\Wq$ is well defined. Since $R$ was arbitrary, the conclusion holds for all $t>0$.
\end{proof}

\subsection{The $q$-Ito formula}
Our second main result is the $q$-version of the Ito formula.
\begin{theorem}\label{Thm2}
  Let $0<q<1$. Suppose $f(x,t)$ is a polynomial in $x$ with coefficients that   satisfy estimate \eqref{EQ:bound} in a neighborhood of $t=0$.
   Then \eqref{EQ-ito} holds.
\end{theorem}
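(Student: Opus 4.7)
The plan is to reduce, by linearity, to the case $f(x,s)=\phi(s)h_m(x;s)$ using the $q$-Hermite expansion \eqref{EQ:f2h}; Lemma~\ref{Lem:1} ensures the coefficients $b_m$ inherit the bound \eqref{EQ:bound}. For such an $f$ with $m\ge 1$, the $q$-integration by parts formula \eqref{EQ:by-parts} gives
\[
\phi(t)h_m(\Wq_t;t)=\int_0^t \phi(s)\,d_q h_m(\Wq_s;s)\;+\;\int_0^t h_m(\Wq_{qs};qs)\,d_q\phi(s),
\]
and the first integral equals $\int_0^t \phi(s)[m]_q h_{m-1}(\Wq_s;s)\dd\Wq$ directly from the definition \eqref{EQ:q-Int-1}. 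The case $m=0$ collapses everything to the $q$-antiderivative formula \eqref{EQ:antideriv}.

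The whole proof then hinges on two pointwise identities on the $q$-Hermite polynomials:
\[
(\nabla_x^{(s)} h_m)(x;s)=[m]_q\, h_{m-1}(x;s),\qquad (\calD_{q,s} h_m)(x;s)+(\Delta_x^{(s)} h_m)(x;s)=0.
\]
For the first I would induct on $m$: subtracting the three-term recurrence \eqref{EQ:rec-q-hermite} at $y$ and $x$ and dividing by $y-x$ yields the divided-difference recursion $h_{m+1}[y,x;s]=h_m(y;s)+x\,h_m[y,x;s]-s[m]_q h_{m-1}[y,x;s]$. Averaging against $\nu_{x,s}(dy)=P_{sq^2,s}(qx,dy)$ and using the martingale property \eqref{EQ:mart} together with the scaling \eqref{EQ:h(t)2h(1)} to compute $\int h_m(y;s)\,\nu_{x,s}(dy)=h_m(qx;sq^2)=q^m h_m(x;s)$, the induction closes via $q^m+[m]_q=[m+1]_q$.

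The second identity, equivalent to $(1-q)s\,(\Delta_x^{(s)} h_m)(x;s)=h_m(x;qs)-h_m(x;s)$, is the main obstacle; it is the $q$-analog of the space-time harmonicity $(\partial_s+\tfrac12\partial_x^2)H_m=0$ and reflects the martingale property of $h_m(\Wq_t;t)$. My plan is another induction on $m$: combine the three-term recurrence with the product rule $\calD_{q,s}(s\,h_{m-2})=s\,\calD_{q,s}h_{m-2}+h_{m-2}(x;qs)$ on the left, and match against a parallel recursion for $\Delta_x^{(s)} h_m$ derived from the kernel factorization $\mu_{x,s}(dy,dz)=P_{qs,s}(x,dy)P_{q^2s,s}(qy,dz)$ and the conditional expectations $\int h_k(y;s)\,P_{qs,s}(x,dy)=h_k(x;qs)$ and $\int h_k(z;s)\,P_{q^2s,s}(qy,dz)=q^k h_k(y;s)$. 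Tracking how the three-variable divided difference interacts with these iterated kernels is where the bulk of the calculation will lie.

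Granted the two identities, the assembly is routine. Since $\nabla_x^{(s)}$ and $\Delta_x^{(s)}$ act only on $x$, one has $(\nabla_x^{(s)} f)(x;s)=\phi(s)[m]_q h_{m-1}(x;s)$ and $(\Delta_x^{(s)} f)(x;s)=\phi(s)(\Delta_x^{(s)} h_m)(x;s)$, while the $q$-product rule gives $(\calD_{q,s} f)(x;s)=\phi(s)(\calD_{q,s} h_m)(x;s)+(\calD_{q,s}\phi)(s)\cdot h_m(x;qs)$. Adding and applying the second identity collapses the combined drift to $(\calD_{q,s}\phi)(s)\,h_m(x;qs)$, which evaluated at $x=\Wq_{qs}$ and integrated against $d_qs$ reproduces $\int_0^t h_m(\Wq_{qs};qs)\,d_q\phi(s)$, matching the second integration-by-parts term. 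The stochastic-integral term is the one already identified in the first paragraph, and summing over $m$ finishes the proof.
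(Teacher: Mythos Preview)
Your overall architecture coincides with the paper's: reduce by linearity to $f(x,s)=\phi(s)h_m(x;s)$, apply $q$-integration by parts, and then discharge everything via the two identities $\nabla_x^{(s)} h_m=[m]_q h_{m-1}$ and $\Delta_x^{(s)} h_m=-\calD_{q,s}h_m$. Your induction for the first identity is fine (the paper simply cites it as a known lemma).

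Where you diverge is in the second identity. You propose a direct induction on $m$ for $\Delta_x^{(s)} h_m$, threading the three-variable divided difference through the product kernel $P_{qs,s}(x,dy)P_{q^2s,s}(qy,dz)$. The paper instead works backwards: it \emph{defines} a linear operator $D_x^{(s)}$ on polynomials by declaring $D_x^{(s)} h_j:=-\calD_{q,s}h_j$, and then shows $D_x^{(s)}=\Delta_x^{(s)}$ on all monomials. The device is the ``commutator'' $A(p):=D_x^{(s)}(xp)-xD_x^{(s)}(p)$; a short computation with the recurrence and the $q$-product rule gives $A(h_m)=[m]_q h_{m-1}(x;qs)$, and then the martingale identity plus the already-established $\nabla$-identity convert $A$ into the single-integral form $\iint\frac{p(z)-p(y)}{z-y}\,\mu_{x,s}(dy,dz)$. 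Unwinding $D_x^{(s)}(x^n)=\sum_{k<n}x^k A(x^{n-1-k})$ and a geometric-sum identity then produces exactly the kernel of $\Delta_x^{(s)}$. This sidesteps the two-layer recursion you anticipate: one never has to track $\Delta_x^{(s)} h_m$ through the three-term recurrence directly. Your route is workable but heavier; the paper's $A$-trick is what makes the ``bulk of the calculation'' disappear.
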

\subsection{Proof of Theorem \ref{Thm2}}
By linearity, it is enough to  consider a single term in  \eqref{EQ:q-Int-2}. We first consider the constant term, $f(x,t)=b(t)$. Then
$\nabla_x f=0$ as the expression $f(x,s)-f(y,s)$ under the integral in the definition \eqref{EQ:nabla} vanishes.
Similarly, $\Delta_x f=0$, as the expression $(z-x)f(y,s)+(x-y)f(z,s)+(y-z)f(x,s) $ in the definition \eqref{EQ:Delta} vanishes.
So \eqref{EQ-ito} in this case reduces to the $q$-integral identity
$b(t)-b(0)=\int_0^t (\calD_{q,s} b)(s)d_qs$ which was already recalled in \eqref{EQ:antideriv}.

Now we consider a non-constant term of degree $m+1$. We write \eqref{EQ:q-Int-2} as
\begin{multline}
  \label{EQ-one-term-1}
  b(t)h_{m+1}(\Wq_t;t)=[m+1]_q\int_0^t b(s) h_m(\Wq_s;s)\dd \Wq \\ +\int_0^t h_{m+1}(\Wq_{qs};qs)(\calD_{q,s} b)(s)d_qs.
\end{multline}
We first note the following identity.
\begin{lemma}  \label{L:BW} For a fixed $s>0$, we have
$\nabla_x^{(s)} \left(h_{m+1}(x;s)\right)=[m+1]h_m(x;s)$.
\end{lemma}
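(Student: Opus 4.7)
The plan is to prove the lemma by induction on $n$, establishing the stronger statement
$$I_n(x) := \int_{\mathbb{R}} D_n(x,y)\, \nu_{x,s}(dy) = [n]\, h_{n-1}(x;s), \qquad \text{where } D_n(x,y) := \frac{h_n(y;s) - h_n(x;s)}{y-x}.$$
The base cases are immediate: $D_0 \equiv 0$ gives $I_0 = 0$, and $D_1 \equiv 1$ (since $h_1(z;s) = z$) gives $I_1 = 1 = [1]\,h_0(x;s)$.

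For the inductive step, I will first derive an algebraic recursion for $D_n$. Starting from \eqref{EQ:rec-q-hermite} at $y$ and at $x$ and subtracting,
$$h_{n+1}(y;s) - h_{n+1}(x;s) = \bigl(y h_n(y;s) - x h_n(x;s)\bigr) - s[n]\bigl(h_{n-1}(y;s) - h_{n-1}(x;s)\bigr),$$
and then splitting $y h_n(y;s) - x h_n(x;s) = (y-x)h_n(y;s) + x\,(y-x) D_n(x,y)$, dividing by $y-x$ gives
$$D_{n+1}(x,y) = h_n(y;s) + x\,D_n(x,y) - s[n]\,D_{n-1}(x,y).$$

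Next I integrate this identity against $\nu_{x,s}(dy) = P_{sq^{2},s}(qx,dy)$. By the martingale property \eqref{EQ:mart}, $\int h_n(y;s)\,\nu_{x,s}(dy) = h_n(qx; sq^2)$, and the scaling relation \eqref{EQ:h(t)2h(1)} yields the key simplification $h_n(qx; sq^2) = (sq^2)^{n/2} h_n(x/\sqrt{s};1) = q^n h_n(x;s)$. Therefore
$$I_{n+1}(x) = q^n h_n(x;s) + x\, I_n(x) - s[n]\, I_{n-1}(x).$$
Substituting the inductive hypotheses $I_n = [n]\,h_{n-1}$ and $I_{n-1} = [n-1]\,h_{n-2}$, then applying \eqref{EQ:rec-q-hermite} again to write $x h_{n-1}(x;s) = h_n(x;s) + s[n-1]\,h_{n-2}(x;s)$, the two $h_{n-2}$ terms cancel and we are left with
$$I_{n+1}(x) = \bigl(q^n + [n]\bigr) h_n(x;s) = [n+1]\,h_n(x;s),$$
using $q^n + (1 + q + \dots + q^{n-1}) = [n+1]$. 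Specializing to $n = m$ gives the lemma.

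The argument is essentially mechanical; there is no serious obstacle. The only step that requires any thought is spotting the right split $y h_n(y;s) - x h_n(x;s) = (y-x)h_n(y;s) + x(y-x)D_n(x,y)$ so that the recursion \eqref{EQ:rec-q-hermite} for $h_n$ converts cleanly into a linear recursion for $D_n$, and then recognizing that the two inputs needed from the $q$-Brownian structure, namely the martingale property and the scaling $h_n(qx;sq^2) = q^n h_n(x;s)$, conspire to make the $h_{n-2}$ terms cancel and produce exactly $[n+1]$.
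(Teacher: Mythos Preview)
Your argument is correct. The induction is clean, and the two ingredients you isolate---the martingale identity $\int h_n(y;s)\,P_{sq^2,s}(qx,dy)=h_n(qx;sq^2)$ and the scaling $h_n(qx;sq^2)=q^n h_n(x;s)$---are exactly what drives the cancellation.

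The paper does not actually prove this lemma; it merely cites \cite[Lemma 2.3]{Bryc-Wesolowski-2013-gener} and specializes the parameters $\tau=\theta=0$. Your approach is therefore genuinely different: you give a short self-contained argument tailored to the $q$-Hermite case, whereas the paper defers to a result stated in the greater generality of the $q$-Meixner family. What your approach buys is independence from the external reference and transparency about which structural features of $(\Wq_t)$ are being used (just \eqref{EQ:rec-q-hermite}, \eqref{EQ:mart}, and \eqref{EQ:h(t)2h(1)}); what the citation buys is brevity and a pointer to the more general phenomenon. One cosmetic point: in your base case $I_0=0=[0]\,h_{-1}$ the right-hand side should be read as $0$ by the convention $[0]=0$, which you might state explicitly since $h_{-1}$ is not otherwise defined.
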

\begin{proof}
  This is a special case of \cite[Lemma 2.3]{Bryc-Wesolowski-2013-gener}, applied to $\tau=\theta=0$.
\end{proof}
Thus, noting that $\nabla_x^{(s)}$ acts only on variable $x$, we have
 \begin{multline}
  \label{EQ-one-term-2}
  b(t)h_{m+1}(\Wq_t;t) \\= \int_0^t \nabla_x^{(s)} \left(b(s) h_{m+1}(x;s)\right)\Big|_{x=\Wq_s}\dd \Wq+\int_0^t h_{m+1}(\Wq_{qs};qs)(\calD_{q,s} b)(s)d_qs.
\end{multline}
Next, we consider the second term on the right hand side of \eqref{EQ-one-term-1}. Using the $q$-product identity
\begin{equation}
  \label{EQ-q-prod}
  \calD_{q,s}(b(s)h(s))=b(s)\calD_{q,s}(h(s))+h(qs)\calD_{q,s}(b(s)),
\end{equation}
we write
$$h_{m+1}(x;qs)(\calD_{q,s} b)(s)=\calD_{q,s}\left(b(s)h_{m+1}(x,s)\right) - b(s)\calD_{q,s}(h_{m+1}(x,s)).$$
 This recovers the second term in \eqref{EQ-ito}: \eqref{EQ-one-term-2} becomes
 \begin{multline}
  \label{EQ-one-term-3}
  b(t)h_{m+1}(\Wq_t;t)= \int_0^t \nabla_x^{(s)} \left(b(s) h_{m+1}(x;s)\right)\Big|_{x=\Wq_s}\dd \Wq \\ +\int_0^t
  \calD_{q,s}\left(b(s) h_{m+1}(x;s)\right)\Big|_{x=\Wq_{qs}}d_qs  +\int_0^t D_x^{(s)}\left(b(s)h_{m+1}(x;s)\right) \Big|_{x=\Wq_{qs}}d_qs,
\end{multline}
where $D_x^{(s)}$ is a linear operator on polynomials in variable $x$, whose action on monomials $x^n=\sum_{j=0}^n a_j(s)h_j(x;s)$ is defined by
$$
D_x^{(s)}(x^n)= - \sum_{j=0}^n a_j(s)\calD_{q,s}\left(h_j(x;s)\right).
$$
It remains to identify $D_x^{(s)}$ with $\Delta_x^{(s)}$. To do so we use the approach from \cite{Bryc-Wesolowski-2013-gener}. We fix $s>0$ and introduce an auxiliary linear operator $A$ that acts on polynomials $p(x)$  in variable $x$   by the formula
\begin{equation}
  \label{D2P}
  A(p(x))=D_x^{(s)}\left(x p(x)\right)-x D_x^{(s)}\left(p(x)\right).
\end{equation}
\begin{lemma}\label{L:B}
\begin{equation}
  \label{EQ:B2} A\left(h_{m}(x;s)\right)=[m]_qh_{m-1}(x;qs).
\end{equation}

\end{lemma}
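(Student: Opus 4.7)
The plan is to compute $A(h_m(x;s))$ directly from its definition \eqref{D2P}, reduce everything to expressions of the form $\calD_{q,s}(h_n(x;s))$, and then close the identity by applying $\calD_{q,s}$ to the three-term recurrence \eqref{EQ:rec-q-hermite}. First, the recurrence itself gives the expansion $xh_m(x;s) = h_{m+1}(x;s) + s[m]_q h_{m-1}(x;s)$ in the $\{h_j(\cdot;s)\}$-basis, with expansion coefficients $1$ and $s[m]_q$. Plugging this into the definition of $D_x^{(s)}$ (which only differentiates the basis polynomials, not the scalar expansion coefficients) yields
$$D_x^{(s)}\bigl(xh_m(x;s)\bigr)= -\calD_{q,s}\bigl(h_{m+1}(x;s)\bigr)-s[m]_q\,\calD_{q,s}\bigl(h_{m-1}(x;s)\bigr),$$
and similarly $D_x^{(s)}(h_m(x;s))=-\calD_{q,s}(h_m(x;s))$. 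Substituting both into \eqref{D2P} produces
$$A\bigl(h_m(x;s)\bigr)= x\,\calD_{q,s}\bigl(h_m(x;s)\bigr)-\calD_{q,s}\bigl(h_{m+1}(x;s)\bigr)-s[m]_q\,\calD_{q,s}\bigl(h_{m-1}(x;s)\bigr).$$

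The second step is to apply $\calD_{q,s}$ to both sides of the recurrence \eqref{EQ:rec-q-hermite} (treating $x$ as a parameter independent of $s$). The left side becomes $x\calD_{q,s}(h_m(x;s))$, while on the right the only subtle term is $s[m]_q h_{m-1}(x;s)$. Here I invoke the $q$-product rule \eqref{EQ-q-prod} with $b(s)=s$ and $\calD_{q,s}(s)=1$, which gives
$$\calD_{q,s}\bigl(s\,h_{m-1}(x;s)\bigr)= s\,\calD_{q,s}\bigl(h_{m-1}(x;s)\bigr)+h_{m-1}(x;qs).$$
Collecting terms, I obtain
$$x\,\calD_{q,s}\bigl(h_m(x;s)\bigr)-\calD_{q,s}\bigl(h_{m+1}(x;s)\bigr)-s[m]_q\,\calD_{q,s}\bigl(h_{m-1}(x;s)\bigr)=[m]_q\,h_{m-1}(x;qs).$$
Comparing with the formula for $A(h_m(x;s))$ derived in the first step yields \eqref{EQ:B2}.

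This is a verification-style argument with no real obstacle; the one point requiring care is to interpret the definition of $D_x^{(s)}$ correctly, namely that when a polynomial in $x$ is expressed in the $\{h_j(\cdot;s)\}$-basis, the (possibly $s$-dependent) scalar coefficients are carried through unchanged while $\calD_{q,s}$ is applied only to the basis polynomials. Once this is granted, the two appearances of the $q$-derivatives of $h_{m\pm 1}(x;s)$ cancel against those produced by $\calD_{q,s}$ of the recurrence, and the only surviving term is precisely $[m]_q h_{m-1}(x;qs)$, which is the correction coming from the $q$-Leibniz rule applied to the $s$-factor in $s[m]_q h_{m-1}(x;s)$.
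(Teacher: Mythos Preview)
Your proof is correct and follows essentially the same route as the paper's: both compute $D_x^{(s)}(xh_m(x;s))$ via the three-term recurrence \eqref{EQ:rec-q-hermite}, compute $xD_x^{(s)}(h_m(x;s))=-x\,\calD_{q,s}(h_m(x;s))$, and then close the identity by pushing $x$ through $\calD_{q,s}$ (equivalently, applying $\calD_{q,s}$ to the recurrence) together with the $q$-Leibniz rule \eqref{EQ-q-prod} on $s\,h_{m-1}(x;s)$. Your reading of $D_x^{(s)}$---that the $s$-dependent expansion coefficients in the $h_j(\cdot;s)$-basis are frozen while $\calD_{q,s}$ hits only the basis polynomials---is exactly how the paper uses it.
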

\begin{proof} By definition, $A\left(h_{m}(x;s)\right)=D_x^{(s)}\left(x h_{m}(x;s)\right)-x D_x^{(s)}\left(h_{m}(x;s) \right)$, and we evaluate each term separately.
From recursion \eqref{EQ:rec-q-hermite} we get
\begin{equation}\label{EQ:Dx}
D_x^{(s)}\left(x h_{m}(x;s)\right)
=-\calD_{q,s}\left(h_{m+1}(x;s)\right)-s[m]_q\calD_{q,s}\left(h_{m-1}(x;s)\right).
\end{equation}
Next, we have
\begin{multline*}
   x D_x^{(s)}\left(h_{m}(x;s)\right)=-x \calD_{q,s}\left(h_{m}(x;s)\right)=- \calD_{q,s}\left(xh_{m}(x;s)\right) \\ =
- \calD_{q,s}\left(h_{m+1}(x;s)+s [m] h_{m-1}(x;s) \right)\\=
- \calD_{q,s}\left(h_{m+1}(x;s)\right)- [m]_q \calD_{q,s}\left(s  h_{m-1}(x;s) \right).
\end{multline*}
Since $\calD_{q,s}(s)=1$, applying \eqref{EQ-q-prod} we get
$$xD_x^{(s)}\left(h_{m}(x;s)\right)
=- \calD_{q,s}\left(h_{m+1}(x;s)\right) -s[m]_q \calD_{q,s}\left(h_{m-1}(x;s)\right) - [m]_q h_{m-1}(x;qs).$$
To end the proof, we subtract this from \eqref{EQ:Dx}.
\end{proof}
Using martingale property \eqref{EQ:mart} and Lemma \ref{L:BW}, we rewrite \eqref{EQ:B2} as
\begin{multline*}
A\left(h_{m}(x;s)\right)=\int [m]_q h_{m-1}(y;s) P_{qs,s}(x,dy)\\ =\iint \frac{h_{m}(z;s)-h_m(y;s)}{z-y}\nu_{s,y}(dz)P_{qs,s}(x,dy).
\end{multline*}
So by linearity, for any polynomial $p$ in variable $x$ we have
$$
(A p)(x)=\iint \frac{p(z)-p(y)}{z-y}\mu_{x,s}(dy,dz).
$$
Since \eqref{D2P} gives $D_x^{(s)}(x^n)=A(x^{n-1})+x D_x^{(s)}(x^{n-1})$ and $D_x^{(s)}(x)=-\calD_{q,s}(h_1(x;s))=0$ this determines $D_x^{(s)}$ on monomials:
$$
D_x^{(s)}(x^n)=\sum_{k=0}^{n-1}x^k A(x^{n-k-1})=\iint \left(\sum_{k=0}^{n-1} x^k\frac{z^{n-k-1}-y^{n-k-1}}{z-y}\right)\mu_{x,s}(dy,dz).
$$

 By  the geometric sum formula, the integrand is
$$\sum_{k=0}^{n-1} x^k\frac{z^{n-k-1}-y^{n-k-1}}{z-y}=\frac{ (x-z)y^n+(y-x)z^n + (z-y)x^n}{(z-x)(x-y)  (y-z)}.$$
This shows that $ D_x^{(s)}=\Delta_x^{(s)}$ on polynomials. With this identification, \eqref{EQ-one-term-3}  concludes the proof of Theorem \ref{Thm2}.

\begin{remark}\label{Rem-Ito} Operators $\nabla_x^{(s)}$ and $\Delta_x^{(s)}$ are well defined on functions with bounded second derivatives in variable $x$, so we expect that   \eqref{EQ-ito} holds in more generality.

For an analytic function $f(x)$ one can use contour integration to verify that  each term in  \eqref{EQ-ito}  can be approximated uniformly on compacts by the same expression applied to a polynomial.
Thus formula   \eqref{EQ-ito} for polynomials yields
 $$
 f(\Wq_t)=f(0)+\int_0^t (\nabla_x^{(s)} f)(\Wq_s,s)\dd\Wq+\int_0^t   (\Delta_x^{(s)} f)(\Wq_{qs},s) d_q s.
 $$

\end{remark}

\arxiv{
\begin{proof}[Proof of Remark \ref{Rem-Ito}]
Using Cauchy formula with $R$ large enough, we have
\begin{equation}\label{CF1}
g(x,t):=(\nabla_x^{(t)}f)(x,t)=\frac{1}{2\pi i} \oint_{|\zeta|=R} \frac{f(\zeta)}{(\zeta-x)(\zeta-y)}d\zeta d\nu_{x,t}(dy).
\end{equation}
With $p_n(x)=\sum_{k=0}^n a_k x^k\to f(x)$, we see that
\begin{multline*}
g(x,t)=\frac{1}{2\pi i} \oint_{|\zeta|=R} \frac{p_n(\zeta)}{(\zeta-x)(\zeta-y)}d\zeta d\nu_{x,t}(dy)\\+\frac{1}{2\pi i} \oint_{|\zeta|=R} \frac{f(\zeta)-p_n(\zeta)}{(\zeta-x)(\zeta-y)}d\zeta d\nu_{x,t}(dy)
\end{multline*}
can be approximated uniformly in $x$ from  the support of $\gamma_{t;q}$ by polynomials. In particular, expanding $p_n(x)$ into the $q$-Hermite polynomials, from Lemma  \ref{L:BW}  we deduce that
$$
g(x,t)=\sum_{k=0}^\infty \frac{b_n(t)}{[n]!}\nabla_x^{(t)}(h_n)(x,t) =
\sum_{k=0}^\infty \frac{b_{n+1}(t)}{[n]!} h_n(x;t)
$$
and the convergence is uniform over $x$ from the support of $\gamma_t$.
The estimates from proof of Proposition \ref{P4.5} show that  $\int_0^t (\nabla_x^{(s)} f)(\Wq_s,s)\dd\Wq$ is well defined.

Next we note that
$$(\Delta_x^{(s)}f)(x,t)=\frac{1}{2\pi i} \oint_{|\zeta|=R} \frac{f(\zeta)}{(\zeta-x)(\zeta-y)(\zeta-z)}d\zeta d\mu_{x,s}(dy,dz),$$
so $(\Delta_x^{(s)}f)(x,t)$ is also a limit of  $(\Delta_x^{(s)}p_n)(x,t)$, and the convergence is uniform over $x$ from the support of $\gamma_t$ and over all $ s\in[0,t]$.

By  Theorem \ref{Thm2}, formula \eqref{EQ-ito} is valid for polynomials $p_n(x)$ and the contour integral identities show that we can pass to the limit  as $n\to\infty$.

\end{proof}
}
\section{A $q$-analogue of the stochastic exponential} \label{Sec-SDE}

In this section we consider a more general function $f(x,t)$ for which the mean-square limit \eqref{EQ:l.i.m} is applicable.
Our goal is to exhibit a solution to  the ``differential equation" $\dd Z=a Z \dd\Wq$ which we interpret in integral form as
\begin{equation}
  \label{ODE}
  Z_t=c+ a \int_0^t Z_s\dd \Wq
\end{equation}
with deterministic parameters $a,c\in\RR$. In view of the fact that we can  integrate only instantaneous functions of $\Wq_t$,   we seek a solution in this form, and we seek the series expansion.
The solution parallels the development in Ito theory and relies on the identity \eqref{WdW}
which is a special case of our definition \eqref{EQ:q-Int-2}.
Thus,   as a mean-square expansion the solution of \eqref{ODE} is
\begin{equation}
  \label{EQ:Z-series}
  Z_t=c \sum_{n=0}^\infty \frac{a^n h_n(\Wq_t;t)}{[n]_q!}=c\prod_{k=0}^\infty \left(1-(1-q)a q^k \Wq_t +a^2t q^{2k}\right)^{-1}.
\end{equation}
Formal calculations give
$$c+a\int_0^t Z_s \dd\Wq= c+c a \sum_{n=0}^\infty \frac{a^n h_{n+1}(\Wq_t;t)}{[n+1]_q!}=Z_t,$$
as $h_0(x;t)=1$.

Note that the product expression on the right hand side of \eqref{EQ:Z-series} is well defined for all $t$, as with probability one $|\Wq_t|< 2\sqrt{t}/\sqrt{1-q}$.
 However,   the solution  of \eqref{ODE} ``lives" only on the finite interval $0\leq t < a^{-2}(1-q)^{-1}$.
 This is because for $0<q<1$ the product $\prod_{j=1}^{\infty}(1-q^j)$ converges, so the  series
$$
\sum_n \frac{a^{2n}}{[n]!} \int_0^t s^n d_qs=\sum_n \frac{a^{2n}t^{n+1}}{[n+1]!}=\sum_n \frac{(1-q)^{n}a^{2n}t^{n+1}}{\prod_{j=2}^{n+1}(1-q^j)}
$$
converges when $(1-q)a^2t<1$ and hence Corollary \ref{Cor} can be applied only to this range of $t$. This is the same range of $t$ where $(Z_t)$ is a martingale \cite[Corollary 4]{szabl2012q}.

\arxiv{
\section{Deterministic integrands}\label{Sec-Determ} In the Ito-Wiener theory, the distribution of the integral of a deterministic function $b(s)$  is Gaussian with the variance $\int_0^t b^2(s)ds$.
In this section we show that for $0<q<1$ the distribution of the random variable $\int_0^1 b(s)\dd \Wq$ depends on $b(s)$ and $q$ in a more complicated way.
This gives rise to a plethora of  $q$-Gaussian laws that all converge to the Gaussian law as $q\to 1$.
To illustrate this phenomenon, we consider
$$Z=\int_0^1 s^r \dd \Wq=\sum_{k=0}^\infty q^{kr}\left(\Wq_{q^k}-\Wq_{q^{k+1}}\right)
$$ for $r\geq 0$. Then $E(Z)=0$ and from \eqref{EQ-iso} we see that
\begin{equation}
  \label{EZ^2}
  E(Z^2)=\int_0^1 s^{2r}d_qs=\frac{1-q}{1-q^{2r+1}}=\frac{1}{[2r+1]_q}.
\end{equation}
To show that the distribution of $Z$ depends on $r$ and $q$ in a complicated way, we compute the fourth moment. Recall that the $4$-th moment of $\gamma_{t;q}$ is $t^2(2+q)$, where $t$ is the variance. We will show that this pattern fails for $Z$.
}

\arxiv{
Noting that by martingale property $E(\Wq_{u}-\Wq_t|\Wq_s)=0$ for $s<t<u$,  to compute $E(Z^4)$ we need only the formulas
\begin{equation}
  \label{EQ:DW^4}
  E\left((\Wq_{t}-\Wq_{s})^4\right)=(t-s) ((q+2) t-3 q s)
\end{equation}
for $t<s$ and
\begin{equation}
  \label{EQ:DW2DW2}
E\left((\Wq_{t_2}-\Wq_{t_1})^2(\Wq_{u_2}-\Wq_{u_1})^2\right)=(u_2-u_1)(t_2-t_1)
\end{equation}
\begin{equation}
  \label{EQ:DW1DW3}
E\left((\Wq_{t_2}-\Wq_{t_1})(\Wq_{u_2}-\Wq_{u_1})^3\right)=-(1-q)(u_2-u_1)(t_2-t_1)
\end{equation}
for $t_1<t_2\leq u_1<u_2$.

}

\arxiv{
The calculations use  the first four martingale polynomials: the first three were already listed in Example  \ref{Ex:4.2}  and the fourth one is
$h_4(x;t)=x^4-\left(q^2+2
   q+3\right) t x^2 +\left(q^2+q+1\right) t^2$.
We use them to compute recurrently conditional moments. This gives martingale property  that we already used, the quadratic martingale property $E((\Wq_t)^2|\Wq_s)=(\Wq_s)^2+t-s$, and
a bit more cumbersome formulas
\begin{equation}
  \label{m[3]}
E((\Wq_t)^3|\Wq_s)=(\Wq_s)^3+(t-s)(2+q)\Wq_s
\end{equation}

\begin{equation}
  \label{m[4]}
E((\Wq_t)^4|\Wq_s)=(\Wq_s)^4+(t-s)(3+2q+q^2)(\Wq_s)^2+(t-s)((2+q)t-(1+q+q^2)s)
\end{equation}
After a calculation these formulas  give
$$E\big((\Wq_t-\Wq_s)^4\big|\Wq_s\big)= (t-s)\left((1-q)^2  (\Wq_s)^2+(q+2) t- \left(q^2+q+1\right) s\right).$$ To derive \eqref{EQ:DW^4} we  take the expected value of this expression.



To derive \eqref{EQ:DW2DW2}, we use martingale property and quadratic martingale property to compute
$$E\big((\Wq_{u_2}-\Wq_{u_1})^2\big|\Wq_{t_2}\big)= E\big((\Wq_{u_2}-\Wq_{t_2})^2\big|\Wq_{t_2}\big)-E\big((\Wq_{u_1}-\Wq_{t_2})^2\big|\Wq_{t_2}\big)=u_2-u_1.$$

To derive \eqref{EQ:DW1DW3} we use again the  formulas for conditional moments to compute $E\big((\Wq_{u_2}-\Wq_{u_1})^3\big|\Wq_{u_1}\big)=-(1 - q) (u_2 - u_1) \Wq_{u_1}$. This gives
$$E\left((\Wq_{t_2}-\Wq_{t_1})(\Wq_{u_2}-\Wq_{u_1})^3\right)=-(1-q)(u_2-u_1)\left(E(\Wq_{t_2}\Wq_{u_1})- E(\Wq_{t_1}\Wq_{u_1})\right).$$
}

\arxiv{We now compute  $E(Z^4)$. Denoting $\xi_k=\Wq_{q^{k}}-\Wq_{q^{k+1}}$, we see that
$$E(Z^4)= \sum_{k=0}^\infty q^{4rk} E\xi_k^4+ 6 \sum_{k=1}^\infty\sum_{j=0}^{k-1} q^{2r(j+k)}E(\xi_j^2\xi_k^2)+
4  \sum_{k=1}^\infty\sum_{j=0}^{k-1} q^{r(3j+ k)}E(\xi_k\xi_j^3),
$$
as all other terms vanish.
From \eqref{EQ:DW^4} we see that $E((\Wq_{t q^{k}}-\Wq_{t q^{k+1}})^4)=(1-q)^2 t^2 q^{2 k}(2+3q)$, so
$$
\sum_{k=0}^\infty q^{4rk} E\xi_k^4=\frac{(1-q)^2 (2+3 q)}{1-q^{4 r+2}}.
$$
 From \eqref{EQ:DW2DW2} we get
 $$
 \sum_{k=1}^\infty\sum_{j=0}^{k-1} q^{2r(j+k)}E(\xi_j^2\xi_k^2)=(1-q)^2\sum_{k=1}^\infty\sum_{j=0}^{k-1} q^{(2r+1)(j+k)} =\frac{(1-q)^2 q^{2 r+1}}{\left(1-q^{2 r+1}\right)^2
   \left(1+q^{2 r+1}\right)}.
 $$
 From \eqref{EQ:DW1DW3} we get
 $$
 \sum_{k=1}^\infty\sum_{j=0}^{k-1} q^{r(3j+ k)}E(\xi_k\xi_j^3)= -(1-q)^3 \sum_{j=0}^{k-1} q^{(3r+1)j+(r+1)k}  =-\frac{(1-q)^3 q^{r+1}}{\left(1-q^{r+1}\right) \left(1-q^{4
   r+2}\right)}.
 $$
 This gives
    $$E(Z^4)=\frac{(1-q)^2 \left(  2+3 q   -6 q^{r+1}+q^{r+2}+4 q^{2 r+1}-3 q^{2 r+2}-q^{3
   r+3}\right)}{\left(1-q^{r+1}\right) \left(1-q^{2
   r+1}\right)^2 \left(1+q^{2 r+1}\right)}.$$
So using \eqref{EZ^2} we get
    $$\frac{E(Z^4)}{(E(Z^2))^2}=\frac{2+3q-6 q^{r+1}+q^{r+2}+4 q^{2 r+1}-3 q^{2 r+2}-q^{3 r+3} }{\left(1-q^{r+1}\right) \left(1+q^{2 r+1}\right)}.$$
    }

\subsection*{Acknowledgements}
I would like to thank Magda Peligrad for positive feedback and encouragement.  Referee's comments helped to improve the presentation.
   \bibliographystyle{plain}
\bibliography{q-int,../vita}

\end{document}